\documentclass{amsart}
\usepackage[T1]{fontenc}
\usepackage[ansinew]{inputenc}
\usepackage{amsmath}
\usepackage{amsthm}
\usepackage{amssymb}
\usepackage{hyperref}
\usepackage[english]{babel}
\usepackage{setspace}


\newtheorem{thm}{Theorem}
\newtheorem{cor}[thm]{Corollary}

\theoremstyle{remark}

\theoremstyle{definition}
\newtheorem{defn}[thm]{Definition}

\newtheorem{rmk}[thm]{Remark}


\def\beq{\begin{equation}}
\def\eeq{\end{equation}}

\def\crash#1{}

\def\N{{\mathbb N}}
\def\Z{{\mathbb Z}}

\def\R{{\mathbb R}}
\def\C{{\mathbb C}}
\def\O{\Omega}

\def\l{\left}
\def\r{\right}
\def\[[{\l[\l[}
\def\]]{\r]\r]}
\def\p{\prime}

\def\sgq{\sigma_q}

\def\ord{{\rm ord}}

\def\cf{\emph{cf. }}
\def\ie{\emph{i.e. }}
\def\ds{\displaystyle}

\def\cL{{\mathcal L}}

\def\cF{{\mathcal F}}
\def\cH{{\mathcal H}}
\def\cJ{{\mathcal J}}

\def\la{\lambda}

\author{Lucia Di Vizio}
\thanks{Institut de Math\'{e}matiques de Jussieu,
Topologie et g\'{e}om\'{e}trie alg\'{e}briques,
Case 7012,
2, place Jussieu,
75251 Paris Cedex 05, France. e-mail: {\tt divizio@math.jussieu.fr}}

\date{~}

\title[An ultrametric version of the Maillet-Malgrange theorem...]
{An ultrametric version of the Maillet-Malgrange theorem for
nonlinear $q$-difference equations}

\bibliographystyle{alpha}
\begin{document}

\maketitle
\begin{abstract}
We prove an ultrametric $q$-difference version of the
Maillet-Malgrange theorem, on the Gevrey nature of formal solutions of
nonlinear analytic $q$-difference equations. Since $\deg_q$ and
$\ord_q$ define two valuations on $\C(q)$, we obtain, in particular,
a result on the growth of the degree in $q$ and the order at $q$ of
formal solutions of nonlinear $q$-difference equations, when $q$ is
a parameter. We illustrate the main theorem by considering two
examples: a $q$-deformation of ``Painlev\'{e} II'', for the nonlinear
situation, and a $q$-difference equation satisfied by the colored
Jones polynomials of the figure $8$ knots, in the linear case.
\par
We also consider a $q$-analog of the Maillet-Malgrange theorem, both in the complex
and in the ultrametric setting, under the assumption that $|q|=1$ and a classical
diophantine condition.
\end{abstract}


\section*{Introduction}
\addcontentsline{toc}{section}{Introduction}

In 1903, E. Maillet  \cite{Maillet} proved that a formal power series solution of an algebraic
differential equation is Gevrey. B. Malgrange \cite{malgrangemaillet}
generalized and made more precise Maillet's statement in
the case of an analytic nonlinear differential equation.
Finally C. Zhang \cite{changgui} proved a
$q$-difference-differential version of the Maillet-Malgrange theorem.
In the meantime a Gevrey theory for linear $q$-difference-differential equations has
been largely developed; \cf for instance \cite{Ramis}, \cite{Bezivin},
\cite{naegele}, \cite{Fleinert}.
\par
In this paper we prove an analogue of the Maillet-Malgrange theorem for
ultrametric nonlinear analytic $q$-difference equations, under the
assumption $|q|\neq 1$. It generalizes to nonlinear $q$-difference equations a
theorem of  B\'{e}zivin and Boutabaa; \cf
\cite{BezivinBoutabaa}. The proof follows \cite{malgrangemaillet}.
\par
The same technique allows to prove a Maillet-Malgrange theorem for
$q$-difference equations when $|q|=1$, both in the
complex and in the ultrametric setting, under a classical diophantine hypothesis:
this result generalizes the main result of \cite{Bezivin-nonlin} and answers a question
asked therein. Notice that the problem of nonlinear differential
equation in the ultrametric setting is treated in
\cite{SS1},\cite{SS2},\cite{SS3}, where a $p$-adic avatar of diophantine conditions
on the
exponents is also assumed.
\par
One of the reasons that makes the ultrametric statement interesting is the
possible application to the case when $q$ is a parameter (\cf \S\ref{sec:examples} below).
For instance, when $q$ is a parameter, Corollary \ref{cor:q-adicmailletmalgrangevero}
(\cf below) becomes:

\begin{thm}\label{thm:intro}
Suppose that we are given a nontrivial algebraic nonlinear $q$-difference equation
$$
F(q,x,y(x),\dots,y(q^nx))=0\,,
$$
\ie $F(q,x,w_0,\dots,w_n)\in\C[q,x,w_0,\dots,w_n]$ nonidentically zero, with a
formal solution $y(x)=\sum_{h\geq 0}y_hx^h\in\C(q)\[[x\]]$.
Then there exist nonnegative numbers $s,s^\p$ such that
$$
\limsup_{h\to\infty}\frac{1}{h}\l(\deg_q y_h-s\frac{h(h-1)}{2}\r)<+\infty
$$
and
$$
\limsup_{h\to\infty}\frac{1}{h}\l(\ord_q y_h-s^\p\frac{h(h-1)}{2}\r)>-\infty\,.
$$
\end{thm}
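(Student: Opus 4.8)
The plan is to deduce Theorem \ref{thm:intro} from the ultrametric Maillet--Malgrange theorem for $q$-difference equations (Corollary \ref{cor:q-adicmailletmalgrangevero}) by exploiting the fact that $\deg_q$ and $\ord_q$ are the valuations on $\C(q)$ attached, respectively, to the place $q=\infty$ and the place $q=0$. Concretely, fix an embedding of $\C(q)$ into a complete ultrametric field $K$ in two ways: first, with respect to the $q=\infty$ place, so that $|f|_{\infty}=\rho^{\deg_q f}$ for some fixed $\rho>1$ and every $f\in\C(q)$; second, with respect to the $q=0$ place, so that $|f|_{0}=\rho^{-\ord_q f}$. In either completion the element $q$ satisfies $|q|\neq 1$ (indeed $|q|_{\infty}=\rho>1$ and $|q|_{0}=\rho^{-1}<1$), so the hypothesis of the nonlinear ultrametric result is met, and $F$, having polynomial coefficients in $q$ and $x$, is certainly an analytic $q$-difference equation over $K$ in the sense required there.

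The main steps are then as follows. First I would record that the formal solution $y(x)=\sum_h y_h x^h\in\C(q)\[[x\]]$ is, via each of the two embeddings, a formal solution of the same equation over $K$. Second, I would invoke Corollary \ref{cor:q-adicmailletmalgrangevero} in each completion: it produces a Gevrey bound of the shape $|y_h|\leq C\,A^{h}\,|q|^{-s\,h(h-1)/2}$ for suitable constants $C,A>0$ and a nonnegative Gevrey order $s$ (the order being governed by the slopes of the associated Newton polygon and by $|q|$, with the precise normalization taken from the statement of that corollary). Applying this with $|\cdot|_{\infty}$ and taking $\rho$-adic logarithms turns the bound into $\deg_q y_h\leq \log_{\rho}C+h\log_{\rho}A+s\,h(h-1)/2$, which upon dividing by $h$ and letting $h\to\infty$ gives exactly
$$
\limsup_{h\to\infty}\frac{1}{h}\l(\deg_q y_h-s\frac{h(h-1)}{2}\r)\leq \log_{\rho}A<+\infty .
$$
Third, I would run the same argument in the $q=0$ completion: there the Gevrey estimate $|y_h|_{0}\leq C' A'^{\,h}|q|_0^{-s'h(h-1)/2}$ becomes, after taking $-\log_{\rho}$ of both sides and using $|q|_0=\rho^{-1}$,
$$
\ord_q y_h\geq -\log_{\rho}C'-h\log_{\rho}A'+s'\frac{h(h-1)}{2},
$$
whence $\limsup_{h\to\infty}\frac1h\bigl(\ord_q y_h-s'\,h(h-1)/2\bigr)\geq-\log_{\rho}A'>-\infty$, which is the second assertion. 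Note the two Gevrey orders $s$ and $s'$ need not coincide, since the Newton polygons at the two places of $\C(q)$ are governed by the $q$-adic absolute values of the coefficients of $F$ at $q=\infty$ and at $q=0$ respectively; this is why the statement carries two separate constants.

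The only genuine subtlety — and the step I expect to require the most care — is checking that the hypotheses of the nonlinear ultrametric Maillet--Malgrange theorem are literally satisfied after base change to $K$: one must make sure that "nontrivial'' ($F$ not identically zero as a polynomial) transfers to the nondegeneracy condition needed in that theorem (e.g. that $F$ does not vanish identically as a function near the relevant point, or that the linearization along $y$ is nonzero), and that the constants produced can be chosen uniformly enough that the $\limsup$ manipulation is valid. Since $F$ is a fixed polynomial and $y$ a fixed series, this is a finite check: the relevant partial derivative $\partial F/\partial w_j$ evaluated along $y$ is a fixed nonzero element of $\C(q)\[[x\]]$ whose $q$-adic size at each place is controlled, so the argument goes through with explicit constants, and the $\limsup$ computations above are then immediate. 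Everything else is the bookkeeping of converting a multiplicative (absolute-value) estimate into an additive (valuation) one.
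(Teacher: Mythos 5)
Your proposal is essentially the paper's own proof: the paper obtains Theorem \ref{thm:intro} by completing $\C(q)$ with respect to the two ultrametric norms $|\cdot|_{q^{-1}}$ (governed by $\deg_q$) and $|\cdot|_q$ (governed by $\ord_q$), in which $|q|\neq 1$, and observing that the statement is then a particular case of Corollary \ref{cor:q-adicmailletmalgrangevero}; your logarithmic translation of the Gevrey estimates is exactly the intended bookkeeping. Only tidy the signs: with $|q|_\infty=\rho>1$ the Gevrey-$s$ estimate is $|y_h|\le C A^h |q|^{+s\,h(h-1)/2}$ (this is what actually yields your displayed bound on $\deg_q y_h$), and at the $q$-adic place your stated bound gives $\ord_q y_h\ge -\log_\rho C'-h\log_\rho A'-s'\,h(h-1)/2$, hence finiteness of $\limsup_{h\to\infty}\frac1h\bigl(\ord_q y_h+s'\frac{h(h-1)}{2}\bigr)$, in agreement with the sign convention used in the paper's colored Jones polynomial example.
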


\medskip
We could give a more precise statement in which
$1/s$ and $-1/s^\p$ (with the convention $1/0=+\infty$) are slopes of the
Newton polygon of the linearized $q$-difference operator of
$F(q,x,y(x),\dots,y(q^nx))=0$ along $y(x)$ (\cf Theorem \ref{thm:q-adicmailletmalgrange2}).
\par
In classical literature on special functions, $q$ is frequently a
parameter. Basic hypergeometric equations are the most classical
example in the linear case, while the $q$-analogue of Painlev\'{e} equations
are nonlinear examples, that has been largely studied in the last
years.\footnote{For some examples of formal solutions of Painlev\'{e}
equations \cf for instance \cite{Ramani}.} This ultrametric
\emph{``$q$-adic''} approach to the study of a family of functional
equations depending on a parameter is peculiar to $q$-difference
equations.

\paragraph*{Acknowledgement.}
I would like to thank Changgui Zhang, whose questions are at the origin of this paper, and
Jean-Paul B\'{e}zivin for his attentive reading of the manuscript and his
numerous interesting comments.

\section{Ultrametric $q$-analog of the Maillet-Malgrange theorem for $|q|\neq 1$}
\label{sec:statement}

Let $\Omega$ be a complete ultrametric valued field, equipped with the ultrametric norm $|~|$, and let
\emph{$q\in\Omega$ be an element of norm strictly greater that $1$}.\footnote{We could have chosen the
opposite convention $|q|<1$, which leads to analogous statements.}

\subsection{Digression on the linear case}

We denote by $\O\{x\}$ the ring of germs of analytic functions at $0$ with coefficients in $\O$,
\ie the convergent elements of $\O\[[x\]]$.
To the linear $q$-difference equation
$$
\cL y(x)=\sum_{i=0}^n a_i(x)y(q^i x)=0\,,
$$
with $a_i(x)\in\O\{x\}$, we can attach the Newton polygon
\beq\label{eq:NP}
N_q(\cL)= \hbox{ convex envelop}\l(\mathop\cup_{i=0}^n
\l\{(i,h):h\geq\ord_{x=0}a_i(x)\r\}\r)\,.
\eeq
Of course the polygon $N_q(\cL)$ has a finite number of finite
sides, with \emph{rational} slopes, plus two infinite vertical sides.
We adopt the convention that the right vertical side has slope $+\infty$ and
the left one has slope $-\infty$.
\par
B\'{e}zivin and Boutabaa have proved the following result:

\begin{thm}{\rm (\cite{BezivinBoutabaa})}
Let $g(x)\in\O\{x\}$ and $y(x)=\sum_{h\geq 0}y_h x^h\in\O\[[x\]]$ be such that $\cL y(x)=g(x)$.
Then either $y(x)\in\O\{x\}$ or there exists a positive slope $r\in]0,+\infty[$ of $N_q(\cL)$
such that
$$
\sum_{h\geq 0}\frac{y_h}{q^{\frac{h(h-1)}{2r}}}x^h\,,
$$
is a convergent \emph{nonentire} series.
\end{thm}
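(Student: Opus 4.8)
The plan is to read off from $\cL y=g$ the linear recursion satisfied by the coefficients $y_h$, to bound its coefficients in terms of the slopes of $N_q(\cL)$, and to run a strong induction on $h$ for the Gevrey (upper) estimate; the same recursion is then analysed more closely to pin down the exact growth rate and to get non-entireness. First I would normalise: writing $a_i(x)=\sum_{j\ge\ord_{x=0}a_i}a_{i,j}x^j$, divide $\cL$ and $g$ by $x^{\mu}$ with $\mu=\min_i\ord_{x=0}a_i$; this only translates $N_q(\cL)$ vertically, so it changes neither its slopes nor the statement, and we may assume $\mu=0$ and set $i^{*}=\max\{i:\ord_{x=0}a_i=0\}$. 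Comparing the coefficients of $x^{m}$ on the two sides of $\cL y=g$ yields, for every $m$,
\beq\label{eq:rec-sketch}
C_m\,y_m\;=\;g_m-\sum_{i=0}^{n}\ \sum_{0\le h<m}a_{i,m-h}\,q^{ih}\,y_h\,,\qquad C_m:=\sum_{i=0}^{n}a_{i,0}\,q^{im}=\sum_{\ord_{x=0}a_i=0}a_{i,0}\,q^{im}\,.
\eeq
Because $|q|>1$ we have $|C_m|\sim|a_{i^{*},0}|\,|q|^{i^{*}m}$, so $C_m\ne0$ and \eqref{eq:rec-sketch} determines $y_m$ from $y_0,\dots,y_{m-1}$ as soon as $m$ is large (the finitely many small $m$ are harmless).

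The Gevrey bound is the easier half. If $N_q(\cL)$ has no positive slope, equivalently $a_i=0$ for all $i>i^{*}$, then every term of \eqref{eq:rec-sketch} carries $|q^{ih}|\le|q|^{i^{*}(m-1)}$, and this, together with $|C_m|^{-1}\asymp|q|^{-i^{*}m}$ and the geometric decay of the $a_{i,j}$ and of $g_m$, gives $y\in\O\{x\}$ by a standard majorant induction. Otherwise let $0<r_1<\dots<r_k$ be the positive slopes, so $r_1=\min\{\ord_{x=0}a_i/(i-i^{*}):i>i^{*},\ a_i\ne0\}$ is the slope of the first side of the lower hull emanating from $(i^{*},0)$; I would prove by strong induction on $h$, from \eqref{eq:rec-sketch}, that
$$
|y_h|\;\le\;A\,B^{\,h}\,|q|^{\,h(h-1)/(2r_1)}\qquad(h\ge0)
$$
for suitable $A,B>0$. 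The only nontrivial ingredient is combinatorial: for every $i>i^{*}$ with $a_i\ne0$ one has $\ord_{x=0}a_i\ge r_1(i-i^{*})$; combining the inductive hypothesis, $|C_m|^{-1}\asymp|q|^{-i^{*}m}$, and the identity $\tfrac{(m-\kappa)(m-\kappa-1)}{2r_1}=\tfrac{m(m-1)}{2r_1}-\tfrac{\kappa m}{r_1}+\tfrac{\kappa(\kappa+1)}{2r_1}$, the exponent of $|q|$ produced by the $(i,h)$-term is $\le\tfrac{m(m-1)}{2r_1}$ up to a bounded quantity, precisely because $i-i^{*}-\ord_{x=0}a_i/r_1\le0$; the double sum converges and the induction closes. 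Hence $\widetilde y(x):=\sum_h y_h\,q^{-h(h-1)/(2r_1)}x^h$ has positive radius of convergence.

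It remains to locate the exact slope and to obtain non-entireness, and here lies the real work. Substituting $y_h=z_h\,q^{h(h-1)/(2r_1)}$ (so that the $z_h$ are the coefficients of $\widetilde y$) into \eqref{eq:rec-sketch} and dividing by $q^{m(m-1)/(2r_1)}$, the same bookkeeping shows that the recursion for $(z_m)$ has the shape
$$
z_m\;=\;\sum_{\substack{i^{*}<i\le i^{(1)}\\ \ord_{x=0}a_i=r_1(i-i^{*})}}d_i\,z_{m-\ord_{x=0}a_i}\;+\;\rho_m(z_0,\dots,z_{m-1})\,,
$$
where $i^{(1)}$ is the abscissa of the right endpoint of the $r_1$-side, the $d_i$ are (asymptotically) constant with $d_{i^{(1)}}\ne0$, the displayed sum is a genuine finite-order linear recurrence with invertible companion matrix, and every coefficient absorbed into $\rho_m$ decays geometrically in $m$ (the contribution of $g$ even faster than $|q|^{-m(m-1)/(2r_1)}$). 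If $\widetilde y$ is not entire we take $r=r_1$. If it is, then $z_m\to0$ faster than geometrically, and inserted into the companion-matrix form of the recurrence this forces $z$, hence $y$, to be a polynomial (contradicting $y\notin\O\{x\}$) unless the slope $r_1$ splits off, i.e.\ $\cL=\cL_1\cL_2$ over $\O\{x\}$ with $N_q(\cL_1)$ reduced to the single segment of slope $r_1$; then $\cL_2 y$ solves the one-slope equation $\cL_1(\cL_2y)=g$ with entire level-$r_1$ transform, so $\cL_2 y\in\O\{x\}$ (the one-slope case leaving no room for a further factorisation), and one reruns the whole argument for $\cL_2y=(\text{convergent series})$, whose Newton polygon keeps only the positive slopes $r_2<\dots<r_k$. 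An induction on the number of positive slopes then produces a positive slope $r\in\{r_1,\dots,r_k\}$ for which $\sum_h y_h\,q^{-h(h-1)/(2r)}x^h$ is convergent and non-entire.

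The main obstacle is this last step: the Poincar\'{e}--Perron-type analysis of the perturbed recurrence for $\widetilde y$ (that a solution cannot decay faster than geometrically unless it vanishes identically or the leading slope genuinely factors off), together with the Newton-polygon factorisation of $\cL$ over $\O\{x\}$ that makes the induction on the number of positive slopes run. The upper bound, by contrast, is a direct induction once the slope combinatorics of \eqref{eq:rec-sketch} has been sorted out.
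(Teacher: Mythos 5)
You should first note a point of framing: the paper does not prove this statement at all --- it is recalled from B\'ezivin--Boutabaa \cite{BezivinBoutabaa} and used as a black box --- so your proposal has to be judged against what a complete proof requires rather than against an internal argument. The first half of your plan is sound and standard: after normalizing $\min_i\ord_{x=0}a_i=0$, the coefficientwise relation $C_my_m=g_m-\sum_i\sum_{h<m}a_{i,m-h}q^{ih}y_h$ with $C_m=\sum_{\ord_{x=0} a_i=0}a_{i,0}q^{im}$ satisfies $|C_m|=|a_{i^{*},0}|\,|q|^{i^{*}m}$ for $m$ large (ultrametric dominance, $|q|>1$, $i^{*}$ the largest index of minimal order, in your notation), and the strong induction using the convexity inequality $\ord_{x=0}a_i\geq r_1(i-i^{*})$ does give convergence of $\sum_h y_hq^{-h(h-1)/(2r_1)}x^h$, and convergence of $y$ itself when there is no positive slope.

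The genuine gap is the second half, which is the actual content of the theorem: producing a slope $r$ at which the transform is convergent \emph{and non-entire}. Three steps fail as written. (a) Your alternative ``if $\widetilde y$ is entire then either $z$ is eventually zero or the slope $r_1$ splits off as $\cL=\cL_1\cL_2$'' is not a dichotomy: whether the slope factorization over $\O\{x\}$ exists is a property of $\cL$ alone (and is itself a nontrivial theorem in the ultrametric setting that you would have to prove or cite), not something forced by the decay of $z_m$. (b) The Poincar\'e--Perron-type rigidity you invoke does not apply: the relation for $z_m$ is not a fixed-order recurrence with $o(1)$-perturbed coefficients but carries a full-memory term $\rho_m(z_0,\dots,z_{m-1})$ that is only geometrically small in $m$, and a sequence solving $z_m=\sum_id_iz_{m-\kappa_i}+\rho_m$ forward can decay super-geometrically without vanishing --- indeed any convergent non-polynomial solution $y$ of the original equation already produces such a $z$, so no ``polynomial or factorization'' conclusion can follow from the recurrence alone; the hypothesis $y\notin\O\{x\}$ must enter in a finer way. (c) The base case of your induction on the number of positive slopes --- that a pure slope-$r_1$ equation whose solution has entire level-$r_1$ transform has a convergent solution, used to conclude $\cL_2y\in\O\{x\}$ --- is exactly the theorem for one-slope operators and is asserted, not proved. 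What is missing is the quantization/lower-bound argument: one must show that the growth of $|y_h|$ locks onto $|q|^{h(h-1)/(2r)}$ up to factors bounded geometrically above, and below along a subsequence, for some slope $r$ of $N_q(\cL)$; your sketch supplies only the upper bound at $r_1$ and a descent mechanism that, as formulated, does not function.
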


\subsection{Statement of the main result}

Consider an analytic function at $0$ of $n+2$ variable, \ie a power series
$$
F(x,w_0,w_1,\dots,w_n)=\sum_{k,k_0,\dots,k_n>0}A_{k,k_0,\dots,k_n}x^kw_0^{k_0}\cdots w_n^{k_n}
\in\Omega\[[x,w_0,w_1,\dots,w_n\]]\,,
$$
such that
$$
\limsup_{k+\sum_{i=0}^nk_i\to \infty}\l|A_{k,k_0,\dots,k_n}\r|^{\frac{1}{k+\sum_{i=0}^nk_i}}<+\infty\,.
$$
Remark that we have assumed, with no loss of generality,
that $F(0,\dots,0)=0$.
We are interested in studying formal solutions of the
\emph{nonlinear analytic $q$-difference equation}
\beq\label{eq:nonlineq}
F(x,\varphi(x),\varphi(qx),\dots,\varphi(q^nx))=0\,. \eeq To
simplify notation for any $\varphi\in\Omega\[[x\]]$ we set
$\Phi=(\varphi(x),\varphi(qx),\dots,\varphi(q^nx))$, and we denote
by $\sgq$ the usual $q$-difference operator acting on $\O\[[x\]]$:
$$
\begin{array}{rccc}
\sgq:&\O\[[x\]]&\longrightarrow&\O\[[x\]]\,,\\
&\varphi(x)&\longmapsto&\varphi(qx)\,.
\end{array}
$$
For any formal power series $\varphi(x)\in\Omega\[[x\]]$, such that
$\varphi(0)=0$, let $\cF_\varphi$ be the \emph{linearized
$q$-difference operator of $F$ along $\varphi$}:
$$
\cF_\varphi=\sum_{i=0}^n\frac{\partial F}{\partial w_i}(x,\Phi)\sgq^i\,.
$$
The operator $\cF_\varphi$ being linear, we can define its \emph{Newton polygon $N_q(\cF_\varphi)$}
in the
usual way (\cf equation \ref{eq:NP}).
We want to prove that, for a solution $\varphi(x)$ of (\ref{eq:nonlineq}),
the positive slopes of $N_q(\cF_\varphi)$ are linked to the
$q$-Gevrey order of $\varphi(x)$:

\begin{defn}
A formal power series $\varphi(x)=\sum_{h\geq 0}\varphi_h x^h\in\Omega\[[x\]]$
is a \emph{$q$-Gevrey series (of order $s\in\R$)} if the series
$$
\sum_{h\geq 0}\frac{\varphi_h}{q^{s\frac{h(h-1)}{2}}}x^h
$$
is convergent.
\end{defn}

We can state our main result:

\begin{thm}\label{thm:q-adicmailletmalgrange}
Let $\varphi(x)\in x\Omega\[[x\]]$ be a  formal solution of the equation (\ref{eq:nonlineq})
and let $r\in]0,+\infty]$ be the smallest positive slope of the Newton polygon of $\cF_{\varphi}$.
If $\frac{\partial F}{\partial w_n}(x,\Phi)\neq 0$, then $\varphi(x)$ is a $q$-Gevrey series
of order $1/r$.\footnote{We have implicitly set $1/+\infty=0$.}
\end{thm}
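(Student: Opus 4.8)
The plan is to follow the majorant method of \cite{malgrangemaillet}, adapted to $q$-difference equations. Set $b_i(x)=\frac{\partial F}{\partial w_i}(x,\Phi)$, so $\cF_\varphi=\sum_{i=0}^nb_i(x)\sgq^i$, and put $s=1/r$; the aim is to prove that $\sum_h\varphi_hq^{-s\frac{h(h-1)}{2}}x^h$ converges. The first step is a reduction: one cuts $\varphi$ at a high order $N$, writing $\varphi=P_N+\xi$ with $P_N=\sum_{h\le N}\varphi_hx^h$ and $\xi\in x^{N+1}\O\[[x\]]$. Since shifting the index by $N+1$ multiplies $q^{s\frac{h(h-1)}{2}}$ only by a factor geometric in $h$, $\varphi$ is $q$-Gevrey of order $s$ if and only if $\xi$ is, so it suffices to treat $\xi$. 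Plugging $\varphi(q^ix)=P_N(q^ix)+\xi(q^ix)$ into (\ref{eq:nonlineq}) and Taylor-expanding $F$ at $\bigl(x,P_N(x),\dots,P_N(q^nx)\bigr)$ gives an equation
\beq
\cL_N\,\xi=-g_0-\cH\bigl(x,\xi(x),\dots,\xi(q^nx)\bigr)\,,
\eeq
where $\cL_N=\sum_{i=0}^nc_i(x)\sgq^i$ with $c_i(x)=\frac{\partial F}{\partial w_i}(x,P_N(x),\dots,P_N(q^nx))$, $g_0(x)=F(x,P_N(x),\dots,P_N(q^nx))$, and $\cH$ collects the Taylor terms of order $\ge2$ in the increments $\xi(q^ix)$. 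As $P_N$ is a polynomial and $F$ is analytic, $c_0,\dots,c_n,g_0\in\O\{x\}$ and $\cH$ is analytic in all its variables (on a polydisc whose $x$-radius may shrink with $N$, which is harmless since $N$ will be fixed first); and since $P_N$ agrees with the solution $\varphi$ up to order $N$, one has $g_0=F(x,P_N,\dots)-F(x,\varphi,\dots)\in x^{N+1}\O\{x\}$ and $c_i-b_i\in x^{N+1}\O\[[x\]]$. Hence, once $N$ exceeds all the $\ord_{x=0}b_i$ and the heights of the vertices of $N_q(\cF_\varphi)$, the operator $\cL_N$ has convergent coefficients, $N_q(\cL_N)=N_q(\cF_\varphi)$ — in particular its smallest positive slope is again $r$ — and $c_n\not\equiv0$.

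The heart of the proof, and the step I expect to be the main obstacle, is the \emph{linear estimate}: $\cL_N$ admits a right inverse $R_N$ bounded on the Banach spaces
$$
\O^s_\rho=\Bigl\{u=\sum_hu_hx^h:\ \|u\|_\rho:=\sum_h|u_h|\,|q|^{-s\frac{h(h-1)}{2}}\rho^h<\infty\Bigr\}
$$
for every small enough $\rho>0$. This is precisely the quantitative form of the theorem of B\'{e}zivin--Boutabaa recalled above (\cf \cite{BezivinBoutabaa}): conjugating the recursion attached to $\cL_Nu=g$ by the rescaling $u_h\mapsto u_hq^{-s h(h-1)/2}$ and then analysing it along the side of slope $r$ of $N_q(\cL_N)$ shows that a solution $u$ lies in some $\O^s_\rho$ with $\|u\|_\rho$ controlled in terms of $\|g\|_\rho$. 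It is here that the hypothesis $\frac{\partial F}{\partial w_n}(x,\Phi)\neq0$, i.e. $c_n\not\equiv0$, enters decisively: it forces the column $i=n$ to carry a vertex of the Newton polygon, so that $N_q(\cL_N)$ and its slopes are the expected ones and the top of the recursion does not degenerate. (When $N_q(\cL_N)$ has no positive slope, $r=+\infty$, $s=0$, and one recovers the classical statement that $R_N$ preserves convergent series.) The technical subtlety is that, $c_n(0)$ being possibly zero, this recursion is governed by the slope-$r$ side rather than by a single diagonal term, and one must keep track of the dependence of $\|R_N\|$ on $\rho$.

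With the linear estimate in hand, I would close by a fixed point argument. On the ball $B_M=\{\zeta\in x^{N+1}\O^s_\rho:\|\zeta\|_\rho\le M\}$ set $T(\zeta)=R_N\bigl(-g_0-\cH(x,\zeta(x),\dots,\zeta(q^nx))\bigr)$. Because $\|\cdot\|_\rho$ is submultiplicative and $\cH$ is analytic of order $\ge2$ in the increments, $\|\cH(x,\zeta(x),\dots,\zeta(q^nx))\|_\rho=\mathcal{O}(M^2)$ and $T$ has Lipschitz constant $\mathcal{O}(M)$ on $B_M$; combining this with $\|g_0\|_\rho=\mathcal{O}(\rho^{N+1})$ and the bound on $R_N$, one checks that for $N$ large and then $\rho,M$ small enough, $T$ maps $B_M$ into itself and is a contraction. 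Its unique fixed point $\zeta_\infty\in\O^s_\rho$ is $q$-Gevrey of order $s$. Finally $\zeta_\infty=\xi$: reversing the Taylor expansion, $P_N+\zeta_\infty$ is a formal solution of (\ref{eq:nonlineq}) agreeing with $\varphi$ up to order $N$, and two formal solutions of (\ref{eq:nonlineq}) agreeing to a sufficiently high order must coincide — were they to differ first at order $\ord\delta$, their difference $\delta$ would satisfy $\cF_\varphi(\delta)+\mathcal{O}(\delta^2)=0$ while, for $\ord\delta$ large, the lowest-order term of $\cF_\varphi(\delta)$ is non-zero (its leading coefficient is a non-zero Laurent polynomial in $q^{\ord\delta}$, and $b_n\not\equiv0$ keeps it from being trivial) and of strictly lower order than $\mathcal{O}(\delta^2)$, a contradiction. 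Therefore $\xi$, and with it $\varphi$, is $q$-Gevrey of order $1/r$.
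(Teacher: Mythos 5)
Your overall architecture (truncate $\varphi$ at a high order, Taylor-expand $F$ along the truncation, solve for the tail by a contraction in a $q$-Gevrey-weighted Banach space, then identify the fixed point with the tail via uniqueness of formal solutions) matches the paper's strategy, and your closing uniqueness argument is essentially the role played in the paper by the unique formal solvability of the auxiliary equation. But the step you yourself flag as the main obstacle is a genuine gap, on two counts. First, the ``quantitative form of the theorem of B\'ezivin--Boutabaa'' you invoke does not exist in the cited reference: their theorem is a qualitative dichotomy about formal solutions that are already given, not the existence of a right inverse $R_N$ of the variable-coefficient operator $\cL_N$ together with a norm bound on $\O^s_\rho$. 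Establishing such a bound (handling the finitely many resonances where the indicial factor $\cL(q^h)$ vanishes, the interplay of the slope-$r$ side of the Newton polygon with the weight $|q|^{-s\frac{h(h-1)}{2}}$, and the dependence on $\rho$) is precisely the analytic heart of the theorem, and your one-sentence sketch does not supply it. Second, the functional-analytic bookkeeping is incorrect as stated: the dilation $\zeta(x)\mapsto\zeta(q^ix)$ multiplies the degree-$h$ coefficient by $q^{ih}$ and is therefore \emph{unbounded} on $\O^s_\rho$ (it sends $\O^s_\rho$ into $\O^s_{\rho/|q|^i}$ only), so submultiplicativity alone does not give $\|\cH(x,\zeta(x),\dots,\zeta(q^nx))\|_\rho=\mathcal{O}(M^2)$, and ``$R_N$ bounded on $\O^s_\rho$'' plus that nonlinear estimate cannot both hold on one and the same space. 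When $s>0$ the loss $|q|^{ih}$ can be absorbed only by exploiting that both increments have order $>N$ (the weight gains $|q|^{-sh_1h_2}$ on products) and that inverting divides by $\cL(q^h)\approx|q|^{n'h}$; when $r=+\infty$, i.e. $s=0$ (a case covered by the theorem), the single-space estimates fail outright and only the composition of the gain in $R_N$ with the loss in the dilations is bounded.

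For comparison, the paper avoids having to invert the full operator $\cL_N$ at all: it extracts from the linearization only the constant-coefficient operator $\cL(q^k\sgq)$ attached to the bottom vertex $(n^\p,l)$ of $N_q(\cF_\varphi)$, which acts diagonally ($x^h\mapsto\cL(q^{k+h})x^h$) and is invertible between two \emph{differently} weighted spaces $\cH_{s,n^\p}\to\cH_{s,0}$; the remaining linear terms and the quadratic terms are pushed into a perturbation carrying a factor $\la$, and the choices $k\geq k_1$ (to avoid zeros of $\cL(q^h)$) and $k\geq k_2+l+1$ with $k_2\geq r(n-n^\p)$ are exactly what make that perturbation analytic between those two spaces, absorbing the $q^{ih}$ factors you lose track of; the ultrametric implicit function theorem and the uniqueness of the formal solution of the auxiliary equation then conclude. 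If you want to keep your route, you must actually prove the weighted right-inverse estimate for $\cL_N$ (in effect a quantitative re-proof of B\'ezivin--Boutabaa) and repair the choice of spaces so that the dilation losses are accounted for, including the case $r=+\infty$.
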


As a consequence we obtain:

\begin{cor}\label{cor:q-adicmailletmalgrangevero}
Let $\varphi(x)\in x\Omega\[[x\]]$ be a  formal solution of equation (\ref{eq:nonlineq}).
If $F(x,w_0,w_1,\dots,w_n)$ is not identically zero, then $\varphi(x)$ is a $q$-Gevrey series
(of some nonspecified order).
\end{cor}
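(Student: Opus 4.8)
The plan is to deduce Corollary~\ref{cor:q-adicmailletmalgrangevero} from Theorem~\ref{thm:q-adicmailletmalgrange} by removing the two auxiliary hypotheses of the theorem, namely that $\frac{\partial F}{\partial w_n}(x,\Phi)\neq 0$ and that $\cF_\varphi$ has a positive slope at all. First I would observe that if $\cF_\varphi$ happens to have no positive slope, then the statement is vacuous in the sense that $r=+\infty$ is the ``smallest positive slope'' (by the convention $\inf\emptyset=+\infty$), so $\varphi$ is $q$-Gevrey of order $0$, i.e.\ convergent; this is exactly the degenerate case already covered. The real work is therefore to handle the case $\frac{\partial F}{\partial w_n}(x,\Phi)=0$, and more generally the case where several of the top $\frac{\partial F}{\partial w_i}(x,\Phi)$ vanish.

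The key step is a reduction to a lower-order equation. Since $F$ is not identically zero, not all partial derivatives $\frac{\partial F}{\partial w_i}$ can vanish identically as power series; but we are evaluating at $\Phi=(\varphi(x),\dots,\varphi(q^nx))$, so I must argue that at least one of the $\frac{\partial F}{\partial w_i}(x,\Phi)$ is a nonzero element of $\Omega\[[x\]]$. Here I would use that $\varphi(x)\in x\Omega\[[x\]]$ together with the $q$-difference substitution: if $\frac{\partial F}{\partial w_i}(x,\Phi)=0$ for all $i$, then one shows $F$ itself must be identically zero as a formal power series, contradicting the hypothesis — for instance because $F(x,w_0+\varphi(x),\dots,w_n+\varphi(q^nx))$ would then have vanishing linear part in the $w_i$, and iterating one controls the whole expansion, or more simply one invokes that the map sending a formal series to its composition with the $\varphi(q^ix)$ is injective on the relevant space. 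Let $m$ be the largest index with $\frac{\partial F}{\partial w_m}(x,\Phi)\neq 0$. Then $\cF_\varphi=\sum_{i=0}^m\frac{\partial F}{\partial w_i}(x,\Phi)\sgq^i$ has a well-defined Newton polygon with a genuine top vertex at abscissa $m$, and $\frac{\partial F}{\partial w_m}(x,\Phi)\neq 0$.

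Now I would apply Theorem~\ref{thm:q-adicmailletmalgrange} not to $F$ directly but to a modified equation of order $m$ whose linearized operator along $\varphi$ coincides (up to the relevant slopes) with $\sum_{i=0}^m\frac{\partial F}{\partial w_i}(x,\Phi)\sgq^i$: concretely, truncating $F$ to an equation $\widetilde F(x,w_0,\dots,w_m)$ in the first $m+1$ variables $w_0,\dots,w_m$, chosen so that $\varphi$ is still a solution and so that $\frac{\partial\widetilde F}{\partial w_m}(x,\Phi)=\frac{\partial F}{\partial w_m}(x,\Phi)\neq 0$. The theorem then gives that $\varphi$ is $q$-Gevrey of order $1/r'$, where $r'$ is the smallest positive slope of $N_q(\cF_\varphi)$ (which is finite, since the polygon now has a finite top vertex, so either it has a positive finite slope or it is itself the ``convergent'' case). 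Taking this order $1/r'$ as the unspecified order in the statement finishes the proof.

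The main obstacle I expect is the injectivity/nonvanishing argument in the second paragraph: one needs a clean reason why $F\not\equiv 0$ forces some $\frac{\partial F}{\partial w_i}(x,\Phi)\neq 0$ in $\Omega\[[x\]]$, which is not purely formal because specializing $w_i\mapsto\varphi(q^ix)$ could a priori kill things. The cleanest route is probably to note that if all $\frac{\partial F}{\partial w_i}(x,\Phi)$ vanished, then writing $F(x,\Phi+\cdot)$ as a Taylor series in the increments and using that $F(x,\Phi)=0$ as well, one gets that $F$ vanishes together with all its first-order data along the curve parametrized by $\varphi$; since the $q^ix$ are ``independent enough'' (the $q$-difference analog of algebraic independence of derivatives), this forces $F\equiv 0$. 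Alternatively, and perhaps more robustly, one can bypass the issue entirely by an induction on $n$: apply the already-proved Theorem~\ref{thm:q-adicmailletmalgrange} when $\frac{\partial F}{\partial w_n}(x,\Phi)\neq 0$, and when it vanishes simply regard $F$ as a function of $w_0,\dots,w_{n-1}$ only, reducing $n$, the base case $n=0$ (an equation $F(x,\varphi(x))=0$ with $\frac{\partial F}{\partial w_0}(x,\varphi(x))\neq 0$) being handled directly, and the case $F\equiv 0$ being excluded at every stage by the hypothesis on the original $F$ since identically vanishing of $F$ as a function of fewer variables would follow.
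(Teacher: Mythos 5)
There is a genuine gap, and it sits exactly where you predicted: the claim that $F\not\equiv 0$ forces some first partial $\frac{\partial F}{\partial w_i}(x,\Phi)$ to be nonzero in $\O\[[x\]]$ is simply false. Take $F(x,w_0,w_1)=(w_1-qw_0)^2$ and $\varphi(x)=x$: then $F(x,\Phi)=0$, $F$ is not identically zero, yet $\frac{\partial F}{\partial w_0}(x,\Phi)$ and $\frac{\partial F}{\partial w_1}(x,\Phi)$ both vanish identically, so the linearized operator $\cF_\varphi$ is the zero operator and Theorem \ref{thm:q-adicmailletmalgrange} gives nothing. (Even more trivially, $F=w_n^2$ with $\varphi=0$.) The ``independence of the $q^ix$'' heuristic cannot rescue this, because the failure comes from $F$ vanishing to order $\geq 2$ along the solution, not from any algebraic relation among the $\varphi(q^ix)$. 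Your fallback induction has the same hole: when $\frac{\partial F}{\partial w_n}(x,\Phi)=0$ you cannot ``regard $F$ as a function of $w_0,\dots,w_{n-1}$ only,'' since $F$ genuinely depends on $w_n$; and in your main route the truncated $\widetilde F(x,w_0,\dots,w_m)$ with $\varphi$ still a solution and $\frac{\partial\widetilde F}{\partial w_m}(x,\Phi)\neq0$ is asserted but never constructed — that construction is precisely the missing content.

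The paper's proof supplies exactly the two devices you lack. First, it looks at higher derivatives in the last variable: if some $\frac{\partial^{k}F}{\partial w_n^{k}}(x,\Phi)\neq 0$, let $\kappa$ be the smallest such $k$ and apply Theorem \ref{thm:q-adicmailletmalgrange} to the equation $G(x,\Phi)=0$ with $G=\frac{\partial^{\kappa-1}F}{\partial w_n^{\kappa-1}}$; then $\varphi$ is still a solution (by minimality of $\kappa$) and $\frac{\partial G}{\partial w_n}(x,\Phi)\neq0$, so the theorem applies — this disposes of examples like the one above. Second, if \emph{all} the $w_n$-derivatives vanish along $\Phi$, the Taylor expansion in $w_n$ shows $F(x,\varphi(x),\dots,\varphi(q^{n-1}x),\psi(x))\equiv0$ for every $\psi(x)\in x\O\[[x\]]$; one then chooses $\la\in\O$ with $F(x,w_0,\dots,w_{n-1},\la x)$ not identically zero (possible because $F\not\equiv0$ and $\O$ is infinite) and descends by induction on $n$, with $n=0$ being the algebraic (Puiseux) case. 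Your proposal would need both of these steps to be added before it closes.
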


\subsection{When $q$ is a parameter...}

Suppose that $F(q,x,w_0,\dots,w_n)\in\C[q,x,w_0,\dots,w_n]$, where $q$ is a parameter,
and that we have a formal solution
$\varphi(x)=\sum_{h\geq 0}y_hx^h\in\C(q)\[[x\]]$.\footnote{The results that follows are
actually true when we replace $\C$ by any field.} Up to equivalence, there are
exactly two ultrametric norm over $\C(q)$ such that $q$
has norm different than $1$. For any $f(q)\in\C[q]$
they are defined by
\begin{enumerate}
\item
$|f(q)|_{q^{-1}}=d^{-\deg_q f(x)}$;
\item
$|f(q)|_q=d^{\ord_q f(q)}$;
\end{enumerate}
where $d\in]0,1[$ is a fixed real number.
Of course, $|~|_q$ and $|~|_{q^{-1}}$ extends to $\C(q)$ by multiplicativity.
Notice that $|q|_q=d<1$ and $|q|_{q^{-1}}=d^{-1}>1$.
\par
Taking $\Omega$ to be the completion of $\C(q)$
with respect to $|~|_q$ (resp. $|~|_{q^{-1}}$), we immediately see that Theorem
\ref{thm:intro} is a particular case of Corollary \ref{cor:q-adicmailletmalgrangevero} and that
Theorem \ref{thm:q-adicmailletmalgrange} becomes:

\begin{thm}\label{thm:q-adicmailletmalgrange2}
Let
$$
\frac{\partial F}{\partial w_n}(q,x,y(x),\dots,y(q^nx))\neq 0\,.
$$
If $r\in]0,+\infty]$ (resp. $r^\p\in[-\infty, 0[$) is the smallest positive slope
(resp. the largest negative slope) of $\cF_\varphi$,
then
$$
\limsup_{h\to\infty}\frac{1}{h}\l(\deg_q y_h-s\frac{h(h-1)}{2}\r)<+\infty\,,
\hbox{ with $s=1/r$,}
$$
and
$$
\limsup_{h\to\infty}\frac{1}{h}\l(\ord_q y_h-s^\p\frac{h(h-1)}{2}\r)>-\infty\,,
\hbox{ with $s^\p=-1/r^\p$.}
$$
\end{thm}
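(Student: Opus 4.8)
The plan is to read Theorem~\ref{thm:q-adicmailletmalgrange2} off Theorem~\ref{thm:q-adicmailletmalgrange} (and its mirror image for $|q|<1$) after specializing $\Omega$ to the two inequivalent ultrametric completions of $\C(q)$ with $|q|\neq1$; the only real task is then to rewrite ``$q$-Gevrey of order $s$ over that completion'' as a growth statement for $\deg_q y_h$, resp.\ for $\ord_q y_h$. First I would reduce to $y(0)=0$ by replacing $y$ with $y-y_0$ and $F(q,x,w_0,\dots,w_n)$ with $F(q,x,w_0+y_0,\dots,w_n+y_0)$: this changes only the constant term, leaves $\deg_q y_h$ and $\ord_q y_h$ alone for $h\ge1$, does not change the linearized operator $\cF_\varphi$ along the solution (hence neither $N_q(\cF_\varphi)$, nor $r$, nor $r'$, nor the hypothesis $\partial F/\partial w_n\neq0$), and gives an $F$ whose coefficients lie in $\C(q)$, which sits inside either completion. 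So from now on $\varphi:=y\in x\C(q)\[[x\]]$.

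For the bound on $\deg_q$, I would take $\Omega$ to be the completion of $\C(q)$ for $|~|_{q^{-1}}$, so that $|q|=d^{-1}>1$ and Theorem~\ref{thm:q-adicmailletmalgrange} applies verbatim: $\varphi$ is $q$-Gevrey of order $s=1/r$, i.e.\ $\sum_h\frac{y_h}{q^{s\frac{h(h-1)}{2}}}x^h$ has positive radius of convergence in $\Omega$, equivalently $\limsup_h\bigl|\frac{y_h}{q^{s\frac{h(h-1)}{2}}}\bigr|_{q^{-1}}^{1/h}<+\infty$. Since $\bigl|\frac{y_h}{q^{s\frac{h(h-1)}{2}}}\bigr|_{q^{-1}}=d^{-\deg_q y_h}\cdot d^{\,s\frac{h(h-1)}{2}}=d^{-(\deg_q y_h-s\frac{h(h-1)}{2})}$ and $t\mapsto d^{-t}$ is increasing (as $0<d<1$), this is equivalent to $\limsup_h\frac1h\bigl(\deg_q y_h-s\frac{h(h-1)}{2}\bigr)<+\infty$, which is the first assertion (with the convention $1/(+\infty)=0$ when $N_q(\cF_\varphi)$ has no positive slope).

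For the bound on $\ord_q$, I would take $\Omega$ to be the completion of $\C(q)$ for $|~|_q$, so that $|q|=d<1$. Now Theorem~\ref{thm:q-adicmailletmalgrange} does not apply as stated, but its $|q|<1$ analogue does — the same proof with the roles of the positive and negative slopes exchanged — and it makes $\varphi$ $q$-Gevrey of an order $s'$ determined by the largest negative slope $r'$ of $N_q(\cF_\varphi)$. Repeating the computation of the previous paragraph with $|~|_q$, the $h$-th coefficient of $\sum_h\frac{y_h}{q^{s'\frac{h(h-1)}{2}}}x^h$ has $|~|_q$-norm $d^{\ord_q y_h-s'\frac{h(h-1)}{2}}$, so convergence of this series is equivalent to $\liminf_h\frac1h\bigl(\ord_q y_h-s'\frac{h(h-1)}{2}\bigr)>-\infty$, which a fortiori gives the $\limsup_h\frac1h\bigl(\ord_q y_h-s'\frac{h(h-1)}{2}\bigr)>-\infty$ that is claimed; the change from ``$<+\infty$'' to ``$>-\infty$'' is exactly the sign difference between $|f|_q=d^{\ord_q f}$ and $|f|_{q^{-1}}=d^{-\deg_q f}$. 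A convenient way to obtain the $|q|<1$ analogue (and to see how $s'$ is pinned down by $r'$) is the change of variables $q\mapsto q^{-1}$, $x\mapsto q^nx$, $w_i\mapsto w_{n-i}$: it carries \eqref{eq:nonlineq} into an equation of the same form with formal solution $\psi(x)=\sum_h y_h(q^{-1})x^h$, its linearized operator has Newton polygon the reflection of $N_q(\cF_\varphi)$ across $i=n/2$ (so the largest negative slope $r'$ is sent to the smallest positive slope), and $\deg_q\psi_h=-\ord_q y_h$ since $\deg_q f(q^{-1})=-\ord_q f(q)$ for $f\in\C(q)$; applying the $\deg_q$ bound already proved to $\psi$ and translating back then yields the $\ord_q$ bound.

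All of this is routine once the equivalence ``$\sum c_hx^h$ convergent'' $\Longleftrightarrow$ ``$\limsup_h|c_h|^{1/h}<+\infty$'' is combined with the formulas $|f|_{q^{-1}}=d^{-\deg_q f}$ and $|f|_q=d^{\ord_q f}$. The one step with real content is the $|q|<1$ version of Theorem~\ref{thm:q-adicmailletmalgrange}, and within it the bookkeeping forced by $q\mapsto q^{-1}$: this operation simultaneously reflects the Newton polygon $N_q(\cF_\varphi)$ (so that the relevant datum is the largest negative slope $r'$, reflected to a positive slope) and interchanges $\ord_q$ with $\deg_q$, and the delicate point is simply to check that, after these two operations, the constant $s'$, the slopes $r$ and $r'$, and the direction of each $\limsup$ all come out exactly as in the statement.
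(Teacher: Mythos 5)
Your route is the same as the paper's: Theorem \ref{thm:q-adicmailletmalgrange2} is obtained there by specializing Theorem \ref{thm:q-adicmailletmalgrange} to the two completions of $\C(q)$, and your $\deg_q$ half (completion for $|~|_{q^{-1}}$, where $|q|=d^{-1}>1$, together with the dictionary $|f|_{q^{-1}}=d^{-\deg_q f}$; the preliminary reduction to $y(0)=0$ is fine) is exactly that specialization and is correct. The genuine problem is in the $\ord_q$ half. With $|q|=d<1$, the $|q|<1$ analogue of Theorem \ref{thm:q-adicmailletmalgrange} does \emph{not} assert convergence of $\sum_h y_h\,q^{-s'\frac{h(h-1)}{2}}x^h$: dividing by $q^{s'\frac{h(h-1)}{2}}$ now \emph{inflates} the coefficients, and such a conclusion would force $\ord_q y_h\geq s'\frac{h(h-1)}{2}-O(h)$, which is absurd already for convergent solutions with bounded $\ord_q y_h$ when $s'>0$. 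The mirror statement — as your own reflection $q\mapsto q^{-1}$, $x\mapsto q^nx$, $w_i\mapsto w_{n-i}$ shows — is convergence of $\sum_h y_h\,q^{s'\frac{h(h-1)}{2}}x^h$: applying the $\deg_q$ bound to $\psi_h=y_h(q^{-1})$ and using $\deg_q y_h(q^{-1})=-\ord_q y_h$ yields $\ord_q y_h\geq -s'\frac{h(h-1)}{2}-O(h)$, i.e. $\liminf_h\frac1h\bigl(\ord_q y_h+s'\frac{h(h-1)}{2}\bigr)>-\infty$, with a \emph{plus} sign. Thus your second and third paragraphs derive two different inequalities, and the check you explicitly defer ("the direction of each $\limsup$ comes out exactly as in the statement") is precisely where the attempt fails. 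In fact the printed statement (like Theorem \ref{thm:intro}) carries a sign slip: the intended bound is the one with $+s'\frac{h(h-1)}{2}$, which is what the paper itself uses in the colored Jones example ($\ord_q J(q,n)+2\frac{n(n-1)}{2}$); as literally printed the bound fails whenever $\ord_q y_h$ genuinely decreases quadratically, so no argument can produce it, and the step of yours that "produces" it — the misquoted $|q|<1$ Gevrey normalization — is exactly the false step (your norm computation in that paragraph is correct, but its premise is not).

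A secondary gap: your reflected function $G(\tilde q,x,w_0,\dots,w_n)=F(\tilde q^{-1},\tilde q^{\,n}x,w_n,\dots,w_0)$ has $\partial G/\partial w_n=\partial F/\partial w_0$, so applying Theorem \ref{thm:q-adicmailletmalgrange} to the reflected equation requires $\frac{\partial F}{\partial w_0}(x,\Phi)\neq 0$, which is not among the hypotheses of Theorem \ref{thm:q-adicmailletmalgrange2}. To run the $|q|<1$ case under the stated hypothesis you must either check that the proof of Theorem \ref{thm:q-adicmailletmalgrange} uses $\partial F/\partial w_n\neq0$ only to guarantee that the linearized operator $\cF_\varphi$ is nonzero (so that the mirror statement holds under the same hypothesis), or rerun that proof directly for $|q|<1$, working at the left-hand vertex of the bottom side of $N_q(\cF_\varphi)$ and at the largest negative slope, as the footnote in \S\ref{sec:statement} suggests; as written, your appeal to "the same proof with the roles of the positive and negative slopes exchanged" silently imports a hypothesis you do not have.
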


\section{Examples}
\label{sec:examples}

\subsection{Colored Jones polynomial of figure $8$ knot}

We consider the $q$-difference equation satisfied by the generating function of
the sequence of invariants of the figure $8$ knot called the colored Jones polynomials
(\cf \cite[\S 3]{Garoufdeformationvar}):
$$
J(q,n)=\sum_{k=0}^n q^{nk}(q^{-n-1}; q^{-1})_k(q^{-n+1};q)_k\in\Z[q,q^{-1}]\,,
\ \forall n\in\N\,.
$$
The series $\cJ(x)=\sum_{n\geq 0}J(q,n)x^n\in\C(q)\[[x\]]$ satisfies the linear
$q$-difference equation
$$
\begin{array}{l}
\l[q\sgq(q^2+\sgq)(q^5-\sgq^2)(1-\sgq^2)\r]y(x)-\\ \\
x\Big[\sgq^{-1}(1+\sgq)\Big(q^4+\sgq\l(q^3-2q^4\r)+\sgq^2\l(-q^3+q^4-q^5\r)\\ \\
\hskip 30 pt+\sgq^3\l(-2q^4+q^5\r)+\sgq^4q^4\Big)(q^5-q^2\sgq^2)(1-\sgq)\Big]y(x)+\\ \\
x^2\Big[q^5(1-\sgq)(1+\sgq)(1-q^3\sgq^2)\l(q^8+\sgq(q^9-2q^8)-\sgq^2(-q^7+q^8-q^9)+q^7\sgq^3+q^8\sgq^4\r)\Big]y(x)-\\ \\
x^3\l[q^{10}\sgq(1-\sgq)(1+q^2\sgq)(1-q^5\sgq^2)\r]y(x)=0\,.
\end{array}
$$
The finite slopes of the Newton polygon
are: $-1/2$, $0$, $1/2$.
It is clear looking at the leading term of $J(q,n)$
that $\cJ(x)$ cannot be a converging series for the norms $|~|_q$ and
$|~|_{q^{-1}}$. Therefore it follows from B\'{e}zivin and Boutabaa theorem that
$$
\limsup_{n\to 0}\frac{1}{n}\l(\deg_q J(q,n)-2\frac{n(n-1)}{2}\r)<+\infty
$$
and
$$
\limsup_{n\to 0}\frac{1}{n}\l(\ord_q J(q,n)+2\frac{n(n-1)}{2}\r)>-\infty\,.
$$
Notice that modulo the AJ conjecture (\cf \cite[\S 1.4]{Garoufdeformationvar}),
those slopes are the same as the ones defined in \cite{pentes}.

\subsection{A $q$-deformation of the second Painlev\'{e} equation}

Let us consider the nonlinear $q$-difference equation
associated to the analytic funtion at
$(0,1,1,1)$:\footnote{This example is studied in \cite[\S3.5]{KMNOY1} and
\cite[Eq.(2.55)]{Ramani}, where
many other examples can be found.}
$$
F(x,w_{-1},w_0,w_1)=(w_0+x)(w_0w_1-1)(w_0w_{-1}-1)-qx^2w_0\,,
$$
namely
\beq\label{eq:p2}
(y(x)+x)(y(x)y(qx)-1)(y(x)y(q^{-1}x)-1)-qx^2y(x)=0\,.
\eeq
It is a $q$-deformation of $P_{II}$. Let $\varphi(x)\in\C(q)\[[x\]]$, with $\varphi(0)=1$,
be a formal solution of equation (\ref{eq:p2}). Then
$$
\begin{array}{rcl}
\ds\cF_\varphi
&=&
\sum_{i=-1}^1\frac{\partial F}{\partial w_i}(x,\varphi(q^{-1}x),\varphi(x),\varphi(qx))
    \sgq^i\\ \\
&=&\Big[\big(\varphi(x)+x\big)\big(\varphi(x)\varphi(qx)-1\big)\varphi(x)\Big]\sgq^{-1}\\ \\
&+&\Big[\big(\varphi(x)\varphi(qx)-1\big)\big(\varphi(x)\varphi(q^{-1}x)-1\big)+
    \big(\varphi(x)+x\big)\varphi(qx)\big(\varphi(x)\varphi(q^{-1}x)-1\big)\\ \\
&&+\big(\varphi(x)+x\big)\big(\varphi(x)\varphi(qx)-1\big)\varphi(q^{-1}x)-qx^2\Big]\sgq^0\\ \\
&+&\Big[(\varphi(x)+x)\varphi(x)(\varphi(x)\varphi(q^{-1}x)-1)\Big]\sgq
\end{array}
$$
A formal solution of equation (\ref{eq:p2}) is give by
$$
\varphi(x)=\frac{{}_{1}\Phi_{1}(0;-q;q,-q^2x)}{{}_{1}\Phi_{1}(0;-q;q,-qx)}
=1+\frac{q}{1+q}x+\cdots\,,
$$
where ${}_{1}\Phi_{1}(0;-q;q,x)$ is a basic hypergeometric series:
$$
{}_{1}\Phi_{1}(0;-q;q,-qx)
=\sum_{h\geq 0}\frac{q^{h(h-1)}}{(-q;q)_h(q;q)_h}x^h\,,
$$
and
$$
(a;q)_h=(1-a)(1-aq)\dots(1-aq^{h-1})\,.
$$
A direct and straightforward calculation
shows that the Newton polygon of $\cF_\varphi$ is \emph{regular singular},
meaning that it has only one finite horizontal slope of length $2$,
plus the two vertical sides.
Therefore Theorem \ref{thm:q-adicmailletmalgrange}
implies that the solution $\varphi(x)=1+\sum_{h\geq 1}\varphi_hx^h$ considered
above verifies:
$$
\limsup_{h\to\infty}\frac{1}{h}\deg_q \varphi_h<+\infty
\hbox{ and }
\limsup_{h\to\infty}\frac{1}{h}\ord_q \varphi_h>-\infty\,.
$$
In other words, the solution $\varphi(x)\in\C(q)\[[x\]]$ is convergent for both the norm
$|~|_q$ and the norm $|~|_{q^{-1}}$.
\par
We could have also remarked that ${}_{1}\Phi_{1}(0;-q;q,x)$ is a solution of the linear equation
$$
\sgq^{-2}(\sgq-1)(\sgq+1) y(x)+q^2x y(x)=0\,,
$$
whose Newton polygon has only a horizontal finite slope. This means that
${}_{1}\Phi_{1}(0;-q;q,x)$ is convergent for both $|~|_q$ and $|~|_{q^{-1}}$, and hence
that $\varphi(x)$ is also convergent.

\section{Proofs}

\subsection{Proof of Theorem \ref{thm:q-adicmailletmalgrange}}

The proof follows \cite{malgrangemaillet}.
It relies on the ultrametric implicit function theorem;
\cf \cite{acampo}, \cite{serre}, \cite{SS2}.

\medskip
We set $\varphi(x)=\sum_{h\geq 1}\varphi_hx^h$. For any $k\in\N$, let
\begin{trivlist}
\item
1. $\varphi_k(x)=\sum_{h=0}^k\varphi_h x^h$;
\item
2. $\psi(x)$ be a formal power series such that $\varphi(x)=\varphi_k(x)+x^k\psi(x)$;
\item
3. $\Psi(x)=(\psi(x),\psi(qx),\dots,\psi(q^n x))$ and
$\Phi_k(x)=(\varphi_k(x),\varphi_k(qx),\dots,\varphi_k(q^n x))$.
\end{trivlist}
Let $W=(w_0,\dots,w_n)$, $Z=(z_0,\dots,z_n)$. By taking the Taylor expansion of
$F(x,W+Z)$ at $W$ we obtain:
$$
F(x,W+Z)=F(x,W)+\sum_{i=0}^n\frac{\partial F}{\partial w_i}(x,W)z_i
+\sum_{i,j=0}^nH_{i,j}(x,W,Z)z_iz_j\,,
$$
where $H(x,W,Z)$ is an
analytic function of $2n+3$ variables in a neighborhood of zero.
Hence we
can write:
\beq\label{eq:taylor}
\begin{array}{rcl}
0=F(x,\Phi)
&=&\ds F(x,\Phi_k(x))+x^k\sum_{i=0}^n\frac{\partial F}{\partial w_i}(x,\Phi_k)q^{ik}\sgq^i\psi\\
&&\ds +x^{2k}\sum_{i,j=0}^nH_{i,j}(x,\Phi_k(x),x^k\Psi(x))q^{(i+j)k}\sgq^i\psi\sgq^j\psi\,.
\end{array}
\eeq

To finish the proof we have to distinguish two cases: $r<+\infty$ and $r=+\infty$.

\subsubsection*{Case 1. $r<+\infty$}

We are going to choose $k\geq\sup(k_1,k_2+l+1)$, where $k_1,k_2,l$ are constructed as follows
(\cf figure below).
First of all let $(n^\p,l)\in\N^2$ be the point of $N_q(\cF_\varphi)$ which verifies the
two properties:
\begin{trivlist}
\item 1.
$l$ is the smallest real number such that $(j,l)\in N_q(\cF_\varphi)$ for some $j\in\R$;
\item 2.
$n^\p$ is the greatest real number such that $(n^\p,l)\in N_q(\cF_\varphi)$.
\end{trivlist}
Let us consider the polynomial
$$
\cL(T)=\sum_{i=0}^{n^\p}\l[\frac{1}{x^l}\frac{\partial F}{\partial w_i}(x,\Phi)\r]_{x=0}T^i\,.
$$
We chose $k_1$ to be a positive integer
such that for any $k\geq k_1$, the polynomial $\cL(T)$ does not vanishes at $q^k$,
and $k_2\geq r(n-n^\p)$. Notice that for any $k\geq k_2+l$, the smallest positive slope
of $N_q(\cF_{\varphi_k})$ is equal to $r$ and the point $(n^\p,l)$ is the ``lowest'' point of
$N_q(\cF_{\varphi_k})$ with greater abscissae.

\begin{center}
\smallskip
\begin{picture}(160,100)
\put(0,0){\vector(0,1){100}}%
\put(0,0){\vector(1,0){170}}%
\put(0,40){\line(4,-1){40}}%
\put(40,30){\line(1,0){30}}%
\put(70,30){\line(4,1){40}}%
\put(110,40){\line(4,3){50}}%
\put(160,78){\line(0,1){20}}
\qbezier[90](0,30)(50,30)(160,30)
\put(-5,30){\hbox{\small $l$}}
\qbezier[20](70,0)(70,10)(70,30)
\put(70,-7){\hbox{\small $n^\p$}}
\qbezier[30](160,0)(160,31)(160,78)
\put(160,-7){\hbox{\small $n$}}
\qbezier[25](110,40)(130,45.5)(160,52.5)
\put(163,30){\vector(0,1){22.5}}
\put(163,30){\vector(0,-1){0}}
\put(165,35){\hbox{\small $k_2$}}
\put(84,37){\hbox{\small $r$}}
\put(40,60){\hbox{\small $N_q(\cL_\varphi)$}}
\put(68,27){$\bullet$}
\end{picture}
\bigskip
\end{center}

Remark that for any $k\geq\sup(k_1,k_2+l+1)$ we have
$$
\ord_{x=0}\sum_{i=0}^n\frac{\partial F}{\partial w_i}(x,\Phi_k)q^{ik}\sgq^i\psi
\geq\ord_{x=0}\psi(x)+\inf_{i=0,\dots,n}\ord_{x=0}\frac{\partial F}{\partial w_i}(x,\Phi_k)
\geq l+1\,.
$$
Therefore we can write the linear part of equation (\ref{eq:taylor}) in the form
$$
\frac{1}{x^l}
\sum_{i=0}^n\frac{\partial F}{\partial w_i}(x,\Phi_k)q^{ik}\sgq^i\psi
=\cL(q^k\sgq)\psi+x\widetilde\cL(x,\sgq)\psi\,,
$$
where $\widetilde\cL(x,\sgq)$ is an analytic functional.
Moreover we deduce from equation (\ref{eq:taylor}) that
$$
\ord_{x=0}F(x,\Phi_k)\geq k+l+1\,,
$$
so that there exists an analytic function $M(x,w_0,\dots,w_n)$ such that
equation (\ref{eq:taylor}) divided by $x^{l+k}$ becomes
\beq\label{eq:auxiliarfunctional}
\cL(q^k\sgq)\psi+x\widetilde\cL(x,\sgq)\psi+xM(x,x^k\Psi)=0\,.
\eeq
Since $\cL(q^k\sgq)$ is a linear operator with constant coefficients
and $\cL(q^h)\neq 0$ for any $h\geq k$,
equation (\ref{eq:auxiliarfunctional}) admits one unique formal solution
$\psi(x)\in x\Omega\[[x\]]$, whose coefficients can be constructed recursively.
\par
In order to conclude, we have to estimate the Gevrey order of $\psi(x)$.
Let us consider the following Banach $\Omega$-vector space:
$$
\cH_{s,m}=\l\{\sum_{h\geq 1}\varphi_hx^h\in\Omega\[[x\]]:
\sup_{h\geq 1}|\varphi_h||q|^{hm-s\frac{h(h-1)}{2}}<+\infty\r\}
$$
equipped with the norm
$$
\l\|\sum_{h\geq 1}\varphi_hx^h\r\|_{s,m}=\sup_{h\geq 1}|\varphi_h||q|^{hm-s\frac{h(h-1)}{2}}\,.
$$
Since for any positive rational number $s$ and any pair of positive integers $k,h$
we have
$$
|q|^{s\frac{k(k-1)}{2}}|q|^{s\frac{k(k-1)}{2}}
\leq |q|^{s\frac{(k+h)(k+h-1)}{2}}\,,
$$
the analytic functional
$$
A(\la,\psi)=\cL(q^k\sgq)\psi+\la x\widetilde\cL(\la x,\sgq)\psi+\la xM(\la x,\la^kx^k\Psi)\,
$$
is defined over $\O\times\cH_{s,n^\p}$:
$$
A(\la,\psi):\O\times\cH_{s,n^\p}\longrightarrow\cH_{s,0}\,,
$$
and verifies
$$
\hbox{$A(0,0)=0$ and }\frac{\partial A}{\partial \psi}(0,0)=\cL(q^k\sgq)\,.
$$
Since $\cL(q^k\sgq)$ is invertible, the implicit function theorem implies that
for any $\la$ in a neighborhood
of $0$ there exists $\psi_\la$ such that $A(\la,\psi_\la)=0$.
The formal solution $\psi$ of equation (\ref{eq:auxiliarfunctional}) being unique,
we must have $\psi_\la(x)=\psi(\la x)$ for any $\la$ closed to $0$, which ends the proof.

\subsubsection*{Case 2. $r=+\infty$}

We chose the point $(n^\p,l)$ as in the previous case: since there are no finite positive
slopes, we have $n^\p=n$. We can define the polynomial $\cL(T)$ in the same way as before.
So we choose $k_1\in\N$ such that $\cL(q^k)\neq 0$ for any $k\geq k_1$ and $k_2\in\N$ such that
$$
\inf_{i=0,\dots,n}\ord_{x=0}\l(\frac{\partial F}{\partial w_i}(x,\Phi_k)\r)>l
$$
for any $k\geq k_2$.
We deduce that $\ord_{x=0}F(x,\Phi_k)\geq k+l+1$ and hence we are reduced, by dividing equation
(\ref{eq:taylor}) by $x^{l+k}$, to consider the functional
$$
\cL(q^k\sgq)+\la xM(\la x,\la^kx^k\Psi)=0\,.
$$
The same argument as above also allows us to conclude the proof in this case.

\subsection{Proof of Corollary \ref{cor:q-adicmailletmalgrangevero}}

Following \cite{malgrangemaillet},
we are going to show by induction on $n$ that Theorem \ref{thm:q-adicmailletmalgrange} implies Corollary
\ref{cor:q-adicmailletmalgrangevero}. Notice that for $n=0$ we are in the classical case of Puiseux
development of a solution of an algebraic equation (\cf \cite{malgrangemaillet}).
So let us suppose $n\geq 1$.
\par
If there exists a positive integer $k$ such that
\beq\label{eq:hyp}
\frac{\partial^k F}{\partial w_n^k}(x,\Phi)\neq 0\,,
\eeq
we conclude by applying
Theorem \ref{thm:q-adicmailletmalgrange} to the $q$-difference equation
$$
\frac{\partial^{\kappa-1} F}{\partial w_n^{\kappa-1}}(x,\Phi)=0,
$$
where $\kappa$ is the smallest positive integer verifying equation (\ref{eq:hyp}).
\par
We now suppose that for any positive integer $k$ we have
$\frac{\partial^k F}{\partial w_n^k}(x,\Phi)=0$. By taking the Taylor expansion of
$F(x,w_0,\dots,w_n)$, we can verify that
$F(x,\varphi(x),\dots,\varphi(q^{n-1}x),\psi(x))\equiv0$
for any $\psi(x)\in x\O\[[x\]]$. In particular, there exists $\la\in\O$ such that
$F(x,w_0,\dots,w_{n-1},\la x)$ is not identically zero and
$F(x,\varphi(x),\dots,\varphi(q^{n-1}x),\la x)=0$.
So we are reduced to the case ``$n-1$''.

\section{Complex $q$-analog of the Maillet-Malgrange theorem for $|q|=1$}

Let $\O$ be either the ultrametric field defined in \S\ref{sec:statement}
or the complex field $\C$. We choose $q\in\O$ such that $|q|=1$ and $q$ is
not a root of unity.
\par
To the linear $q$-difference equation
$$
\cL y(x)=\sum_{i=0}^n a_i(x)y(q^i x)=0\,,
$$
with $a_i(x)=a_{i,j_i}x^{j_i}+a_{i,j_i+1}x^{j_i+1}+\dots\in\O\{x\}$,
we can attach a polynomial
$$
Q_\cL(T)=(T-1)\sum_{i=0}^n a_{i,j_i}T^i\,.
$$
We recall the result:

\begin{thm}(\cf \cite[Thm. 6.1]{Bezivin} and \cite[Thm. 6.1]{BezivinBoutabaa})
Let $\varphi(x)\in\O\[[x\]]$ be a formal solution of $\cL y(x)=0$.
We suppose that
\begin{quote}
\begin{itemize}
\item[$({\mathcal H})$]
There exist two constants $c_1,c_2>0$,
such that for any root $u$ of $Q_\cL(T)$ and any $n>>0$ the following inequality is satisfied:
$|q^n-u|\geq c_1 n^{-c_1}$.
\end{itemize}
\end{quote}
Then $\varphi(x)$ is convergent.
\end{thm}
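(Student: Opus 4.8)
The plan is to read off from $\cL\varphi=0$ the recursion satisfied by the coefficients of $\varphi(x)=\sum_{h\ge0}\varphi_hx^h$, to isolate the ``small denominators'' occurring in it, and to dominate them by means of $(\mathcal H)$. Set $v=\min_{0\le i\le n}\ord_{x=0}a_i$. Expanding $\cL\varphi$ and collecting the coefficient of $x^{N}$, the coefficient of largest index that appears is $\varphi_{N-v}$, and it comes multiplied by the value at $q^{\,N-v}$ of the indicial polynomial of the relevant (slope $0$) edge of $N_q(\cL)$ — which, up to the always-nonzero factor $q^{N-v}-1$, is $Q_\cL(q^{\,N-v})$. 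Since $(\mathcal H)$ forces $q$ not to be a root of unity, this multiplier is nonzero for all $N$ large, so for $N\gg0$ the recursion reads
$$
Q_\cL(q^{\,N-v})\,\varphi_{N-v}=-\bigl(\text{a fixed linear combination of the }\varphi_\ell,\ \ell<N-v\bigr),
$$
and determines $\varphi_{N-v}$; the finitely many coefficients of lower index are given data, irrelevant to the convergence statement. (The factor $T-1$ in $Q_\cL$ is there exactly so that $(\mathcal H)$ contains the plain Diophantine condition on $q$ itself, which is needed below.)

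Next I would establish an a priori bound. Using the analyticity of the $a_i$, say $|a_{i,m}|\le BR^{m}$ for suitable $B,R>0$, and choosing $A>R$ large enough that the geometric series in the ``depth'' $N-\ell$ sum to something small, a routine induction gives $|\varphi_\ell|\le CA^{\ell}\delta_\ell$, where the defect satisfies $\delta_N\le\kappa\,\delta_{N-1}\,|Q_\cL(q^{\,N-v})|^{-1}$; hence, up to a geometric factor that can be absorbed into $A$, $\delta_N$ is comparable to $\prod_{k\le N}|Q_\cL(q^{k})|^{-1}$. Everything therefore reduces to showing
$$
\prod_{k=1}^{N}|Q_\cL(q^{k})|^{-1}\le\rho^{N}\qquad(\text{all }N)
$$
for some finite $\rho$: this yields $|\varphi_N|\le C(A\rho)^{N}$, i.e. convergence.

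The heart of the matter is this product estimate, equivalently $\sum_{k\le N}\bigl(-\log|q^{k}-u|\bigr)=O(N)$ for each root $u$ of $Q_\cL$. For a root with $|u|\ne1$ there is nothing to prove, since $|q^{k}-u|\ge\bigl||u|-1\bigr|>0$ uniformly (recall $|q|=1$). For $|u|=1$ both hypotheses enter: $(\mathcal H)$ gives $-\log|q^{k}-u|\le c_1\log k+O(1)$, so the ``bad'' levels are of size at most $\asymp\log N$; and $q$ not being a root of unity makes $(q^{k})_k$ equidistributed on the unit circle, with the quantitative discrepancy coming from the Diophantine part of $(\mathcal H)$ (in the ultrametric case one uses instead the eventual periodicity of $\bar q^{k}$ in the residue field and the corresponding counting). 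Summing $-\log|q^{k}-u|$ dyadically over the ranges $|q^{k}-u|\in(2^{-t-1},2^{-t}]$, with $\#\{k\le N:\ |q^{k}-u|\le2^{-t}\}\le\kappa\,2^{-t}N+o(N)$, the main term is $\sum_t t\,2^{-t}\kappa N=O(N)$ while the accumulated error is $o(N)$, which gives the claim. Turning the \emph{pointwise} control supplied by $(\mathcal H)$ into this \emph{geometric} bound on the accumulated denominators is the step I expect to be the main obstacle.

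Finally, I would note that the complex and ultrametric cases run in parallel, and that an alternative route, parallel to the proof of Theorem \ref{thm:q-adicmailletmalgrange}, is to pass to the companion-system form $Y(x)=B(x)Y(qx)$, diagonalize $B(0)$ — whose eigenvalues are the roots of $Q_\cL$ — and carry out a fixed-point argument for the gauge transformation in a suitable Banach space of analytic vector power series; the same product $\prod_{k}|Q_\cL(q^{k})|^{-1}$ reappears there as the norm of the iterated transformation, so the analytic content is the same.
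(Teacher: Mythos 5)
First, a point of reference: the paper does not prove this statement at all --- it is recalled verbatim from \cite[Thm.~6.1]{Bezivin} and \cite[Thm.~6.1]{BezivinBoutabaa} and then used as a black box (namely, to make $\cL(q^k\sgq)+x\widetilde\cL(x,\sgq)$ act on $\cH(0,r)$ in the proof of Theorem \ref{thm:q=1adicmailletmalgrange}). So your reconstruction has to stand on its own. Its architecture is the right one: extract the coefficient recursion, observe that the only obstruction to convergence is the accumulation of small divisors, and reduce everything to a geometric bound $\prod_{k\le N}|Q_\cL(q^k)|^{-1}\le\rho^N$; you are also right that the term-by-term use of $(\mathcal H)$ only yields $e^{O(N\log N)}$, i.e.\ Gevrey growth, which is not enough.

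However, the step you yourself flag as ``the main obstacle'' is genuinely missing, and the route you propose for it does not follow from $(\mathcal H)$ as stated. The counting bound $\#\{k\le N:\,|q^k-u|\le 2^{-t}\}\le\kappa\,2^{-t}N+o(N)$ is an equidistribution-with-discrepancy statement that requires its own proof; the $o(N)$ errors, accumulated over the $\asymp\log N$ relevant values of $t$, are not controlled in your sketch; and the equidistribution picture has no direct meaning in the ultrametric case (your parenthesis about the residue field is not an argument). The standard way to close this gap uses only $(\mathcal H)$ together with the root $u=1$ of $Q_\cL$ (this is exactly what the factor $T-1$ is for): if $k<k'$ both satisfy $|q^k-u|\le\epsilon$ and $|q^{k'}-u|\le\epsilon$, then, since $|q|=1$, $|q^{k'-k}-1|\le 2\epsilon$, so $(\mathcal H)$ (read as $|q^m-1|\ge c_1m^{-c_2}$) forces $k'-k\ge(c_1/2\epsilon)^{1/c_2}$; hence $\#\{k\le N:\,|q^k-u|\le 2^{-t}\}\le CN2^{-t/c_2}+1$, while $(\mathcal H)$ applied to $u$ itself caps the relevant scales at $t=O(\log N)$, and the dyadic sum $\sum_t t\bigl(CN2^{-t/c_2}+1\bigr)$ is $O(N)$, which is the product estimate you need. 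A second, smaller, gap: your identification of the multiplier of $\varphi_{N-v}$ with $Q_\cL(q^{N-v})/(q^{N-v}-1)$ is only correct when all the $a_i$ have the same order $v$ at $x=0$; in general only the indices $i$ with $\ord_{x=0}a_i=v$ (the slope-zero edge of $N_q(\cL)$) contribute, so the small-divisor polynomial is the edge polynomial, and one must explain why the hypothesis on the roots of $Q_\cL$ controls its values at $q^k$ --- with the literal definition of $Q_\cL$ given here this needs a comment (or one should revert to the precise hypotheses of B\'ezivin's theorem). With these two points repaired, your sketch is essentially the classical small-divisor proof; the alternative fixed-point route you mention at the end is closer in spirit to how the paper itself exploits the cited result, but it does not dispense with the same product estimate.
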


In the nonlinear case we have the following result that generalizes
\cite[\S1]{Bezivin-nonlin}:

\begin{thm}\label{thm:q=1adicmailletmalgrange}
Let $\varphi(x)\in x\Omega\[[x\]]$ be a  formal solution of
the $q$-difference equation
\beq\label{eq:nonlineq=1}
F(x,\varphi(x),\varphi(qx),\dots,\varphi(q^nx))=0\,,
\eeq
analytic at zero.
We make the following assumptions:
\begin{enumerate}

\item
$\frac{\partial F}{\partial w_n}(x,\Phi)\neq 0$,

\item
the polynomial $Q_{\cF_\varphi}$ associated to
the linear operator $\cF_\varphi$ verifies the hypothesis $({\mathcal H})$.
\end{enumerate}
Then $\varphi(x)$ is convergent.
\end{thm}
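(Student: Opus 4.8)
The plan is to mimic closely the proof of Theorem \ref{thm:q-adicmailletmalgrange}, replacing the Gevrey Banach spaces $\cH_{s,m}$ by the Banach spaces of convergent series and replacing the invertibility of the constant-coefficient operator $\cL(q^k\sgq)$ by the quasi-invertibility furnished by the diophantine hypothesis $(\mathcal H)$. Concretely, fix the formal solution $\varphi(x)=\sum_{h\ge1}\varphi_h x^h$, and for a large integer $k$ write $\varphi(x)=\varphi_k(x)+x^k\psi(x)$ exactly as in Section 5. Taking the Taylor expansion of $F$ at $\Phi_k$ as in equation (\ref{eq:taylor}), one isolates the linear-in-$\psi$ part. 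Here the relevant combinatorial datum is not the smallest positive slope but the point $(n^\p,l)$ of the Newton polygon with $l$ minimal (and $n^\p$ maximal on that horizontal line): one sets $\cL(T)=\bigl[x^{-l}\,\partial F/\partial w_i(x,\Phi)\bigr]_{x=0}$-coefficients, and chooses $k$ large enough that, after dividing by $x^{k+l}$, equation (\ref{eq:taylor}) reads
$$
\cL(q^k\sgq)\psi + x\,\widetilde\cL(x,\sgq)\psi + xM(x,x^k\Psi)=0\,,
$$
with $\cL(T)$ having leading coefficient dictated by the horizontal side, and $Q_{\cF_\varphi}(T)=(T-1)\cL(T)$ up to the normalization used in Section 6. (One must check that for $k$ large the passage from $\cF_\varphi$ to $\cF_{\varphi_k}$ does not change the bottom horizontal side, which is the same verification already made in Case 1 of the proof of Theorem \ref{thm:q-adicmailletmalgrange}.)

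Next I would set up the implicit function argument in the convergent category. For $\rho>0$ let $\cB_\rho=\{\sum_{h\ge1}c_hx^h:\ \sup_h|c_h|\rho^h<\infty\}$ with the sup norm; this is the analog of $\cH_{s,m}$ with $s=0$, $m=-\log_{|q|}\rho$. The operator $x\widetilde\cL(x,\sgq)$ and the nonlinear term $xM(x,x^k\Psi)$ are analytic functionals on a neighborhood of $0$ in $\cB_\rho$ for $\rho$ small, vanishing to first order, exactly as before. The new point is that $\cL(q^k\sgq)$ is now an \emph{unbounded} but \emph{boundedly invertible} operator on these spaces: acting on $x^h$ it multiplies by $\cL(q^{k+h})$, and hypothesis $(\mathcal H)$ says $|\cL(q^{k+h})|=\prod_{u}|q^{k+h}-u|\geq c\,h^{-C}$ for $h$ large. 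Hence $\cL(q^k\sgq)^{-1}$ sends $\cB_\rho$ into $\cB_{\rho'}$ for any $\rho'<\rho$, since the polynomial loss $h^C$ is absorbed by an arbitrarily small geometric factor $(\rho'/\rho)^h$. Rewriting the equation as a fixed point $\psi = -\cL(q^k\sgq)^{-1}\bigl(x\widetilde\cL\psi + xM(\cdot,x^k\Psi)\bigr)$ and invoking the ultrametric (resp. complex) implicit function theorem — as cited via \cite{acampo}, \cite{serre}, \cite{SS2} — one gets, for a dilation parameter $\la$ near $0$, a convergent solution $\psi_\la$; uniqueness of the formal $\psi$ forces $\psi_\la(x)=\psi(\la x)$, so $\psi$, hence $\varphi$, converges.

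The main obstacle, and the place where the argument genuinely differs from Section 5, is the handling of the unbounded inverse $\cL(q^k\sgq)^{-1}$: one must phrase the implicit function theorem on a \emph{scale} of Banach spaces $\cB_\rho$ rather than on a single fixed one, checking that the nonlinearity maps $\cB_{\rho}\to\cB_{\rho}$ while the inverse of the linear part gains a radius, so that a genuine contraction (or a genuine application of the analytic implicit function theorem with the linear part already inverted) can be set up on a slightly smaller disc. This is the standard ``small denominators of polynomial type'' trick, and the diophantine condition $(\mathcal H)$ is exactly what makes the denominators at most polynomial in $h$; the only care needed is to make the choice of $k$ (to avoid actual zeros of $\cL(q^k\sgq)$, finitely many by $(\mathcal H)$), the choice of $\rho$ (so the nonlinear terms are defined), and the loss $\rho\to\rho'$ compatible. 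Finally, the reduction handling the degenerate case $\partial F/\partial w_n(x,\Phi)=0$ and the inductive descent on $n$ are literally the same as in the proof of Corollary \ref{cor:q-adicmailletmalgrangevero}, with ``$q$-Gevrey'' replaced throughout by ``convergent''.
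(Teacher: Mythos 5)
Your formal reduction to equation (\ref{eq:auxiliarfunctional}) and the choice of $k$ preserving the bottom of the Newton polygon match the paper. The divergence is in how the linear part is inverted, and that is where there is a genuine gap. You invert only the constant-coefficient operator $\cL(q^k\sgq)$ and propose to absorb the polynomial bound $|\cL(q^{k+h})|\geq c\,h^{-C}$ coming from $({\mathcal H})$ into a geometric loss of radius, then apply ``the implicit function theorem on a scale of Banach spaces''. But that absorption controls a \emph{single} application of $\cL(q^k\sgq)^{-1}$, while solving $\psi=-\cL(q^k\sgq)^{-1}\bigl(x\widetilde\cL\psi+xM(x,x^k\Psi)\bigr)$ makes the inverse act along chains of length comparable to the degree: the coefficient $\psi_h$ involves products of up to $h$ factors $\cL(q^{k+j})^{-1}$. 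Using only the per-step polynomial bound, such products are controlled only by $c^{-h}(h!)^{C}$, which yields a Gevrey-type estimate, not convergence. The scalar model $\psi(qx)-u\,\psi(x)=x\,\psi(x)$, with $\psi_h=\psi_0\prod_{j\leq h}(q^j-u)^{-1}$, shows what is really needed: a lower bound for the \emph{product} of divisors, which requires knowing that small divisors are rare (two indices $j<j^\p$ with $|q^j-u|,|q^{j^\p}-u|<\veps$ force $|q^{j^\p-j}-1|<2\veps$, hence $j^\p-j$ polynomially large in $1/\veps$ — this is where the factor $(T-1)$ in $Q$ enters), and not merely the pointwise estimate of $({\mathcal H})$. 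Likewise there is no ``implicit function theorem on a scale of Banach spaces'' in the naive sense you invoke: a Picard iteration that loses radius at each step either drives the radius to zero or has blowing-up contraction constants; making such a scheme work is Newton/Nash--Moser territory, which you do not set up. (Also, your phrase that the inverse ``gains a radius'' is backwards: it maps $\cB_\rho$ into the larger space $\cB_{\rho^\p}$ with $\rho^\p<\rho$, i.e.\ it loses radius.)

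The paper avoids this difficulty by never splitting off the constant-coefficient part. Its key point is the choice of $k\gg1$ so that the Newton polygon of the \emph{complete} linearized operator $\cL(q^k\sgq)+x\widetilde\cL(x,\sgq)$ coincides with $N_q(\cF_\varphi)$ up to a vertical shift, so that its associated polynomial still satisfies $({\mathcal H})$; then the linear theorems of B\'ezivin and B\'ezivin--Boutabaa (\cite[Thm.~6.1]{Bezivin}, \cite[Thm.~6.1]{BezivinBoutabaa}) — which contain precisely the small-divisor accumulation analysis missing from your argument — are invoked so that this full operator, not just $\cL(q^k\sgq)$, is the linear part inverted on the single Banach algebra $\cH(0,r)$ of convergent series, after which the implicit function theorem is applied once on that fixed space, exactly as in the proof of Theorem \ref{thm:q-adicmailletmalgrange}. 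To repair your proof, either take the derivative of the functional at the base point to be $\cL(q^k\sgq)+x\widetilde\cL(x,\sgq)$ and quote the linear theory for its bounded invertibility on $\cH(0,r)$, or prove the product-of-divisors estimate yourself. A minor point: since the theorem already assumes $\frac{\partial F}{\partial w_n}(x,\Phi)\neq0$, the inductive descent on $n$ you append at the end belongs to Corollary \ref{cor:q-adicmailletmalgrangevero}, not to this statement.
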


\begin{rmk}
Notice that the second hypothesis is always verified in the following cases:
\begin{itemize}

\item
if $\Omega=\C$ and $q$ and the coefficients of $Q$ are algebraic numbers
(\cf \cite[2.2]{Bezivin-nonlin}),

\item
if $\O$ is an extension of a number field $K$ equipped with a $p$-adic valuation,
and $q$ and the roots of $Q(T)$ are in $K$
(in this case it is a consequence of Baker's theorem; \cf for instance
\cite[\S8.3]{DV-Inv-math})

\end{itemize}
\end{rmk}

\begin{proof}[Proof of Theorem \ref{thm:q=1adicmailletmalgrange}.]
The first part of the proof of Theorem \ref{thm:q-adicmailletmalgrange} is completely formal.
So once again we are reduced to consider equation (\ref{eq:auxiliarfunctional})
$$
\cL(q^k\sgq)\psi+x\widetilde\cL(x,\sgq)\psi+xM(x,x^k\Psi)=0\,.
$$
The key-point is the choice of $k>>1$, so that the
Newton polygon of the $q$-difference operator
$\cL(q^k\sgq)+x\widetilde\cL(x,\sgq)$ coincides with the Newton polygon of
$\cF_\varphi$, up to a vertical shift.

\par
Let $\cH(0,r)$ be the Banach algebra of analytic functions
converging over the closed disk $D(0,r^+)$ of center $0$ and radius
$r>0$, for $r$ small enough, equipped with the norm
$$
\l|\sum_{n\geq 0}a_n X^n\r|_{\cH(0,r)}=\sup_{n\geq 0}|a_n|r^n\,.
$$

It follows from \cite[Thm. 6.1]{Bezivin} and
\cite[Thm. 6.1]{BezivinBoutabaa}\footnote{Notice that
\cite[Thm. 6.1]{BezivinBoutabaa} is formulated only
for $q$-difference equations with polynomial coefficients,
but the same proof as \cite[Thm. 6.1]{Bezivin} works in the analytic case.}
that the operator
$\cL(q^k\sgq)+x\widetilde\cL(\la x,\sgq)$ acts on $\O\times \cH(0,r)$
and hence
$$
A(\la,\psi):\O\times\cH(0,r)\longrightarrow \cH(0,r)\,.
$$
The implicit function theorem also allows us to conclude this case.
\end{proof}

\newcommand{\etalchar}[1]{$^{#1}$}

\end{document}